\newtheorem{theorem}{Theorem}[section]
\newtheorem{lemma}[theorem]{Lemma}
\newtheorem{prop}{Proposition}
\theoremstyle{definition}
\newtheorem{example}[theorem]{Example}
\theoremstyle{remark}
\numberwithin{equation}{section}
\begin{document}

 \title[Rationality of meromorphic functions between real algebraic sets]{Rationality of meromorphic functions between real algebraic sets in the plane}


\author{Tuen-Wai Ng}
\address{Department of Mathematics, The University of Hong Kong, Pokfulam, Hong Kong}
\curraddr{}
\email{ntw@maths.hku.hk}
\thanks{Tuen-Wai Ng was partially supported by the RGC grant 17306019.}

\author{Xiao Yao}
\address{School of Mathematical Sciences, Nankai University, P.R. China}
\curraddr{}
\email{yaoxiao@nankai.edu.cn}
\thanks{Xiao Yao was supported by the National Natural Science Foundation of China under Grant No. 11901311  and partially supported by National Key R\&D Program of China (2020YFA0713300).}

\subjclass[2010]{30C99, 30D99}

\date{}

\dedicatory{}


\keywords{Real algebraic curve, meromorphic function, Schwarz function.}
\begin{abstract}
We study one variable meromorphic functions mapping a planar real algebraic set $A$ to another real algebraic set in the complex plane. By using the theory of Schwarz reflection functions, we show that for certain $A$, these meromorphic functions must be rational. In particular, when $A$ is the standard unit circle, we obtain an one dimensional analog of Poincar\'e(1907), Tanaka(1962) and Alexander(1974)'s rationality results for $2m-1$ dimensional sphere in $\mathbb{C}^m$ when $m\ge 2$.
\end{abstract}

\maketitle


\bibliographystyle{amsplain}
\renewcommand{\thefootnote}{}
\section{Introduction}
In 1907, Poincar\'e \cite{Poincare1907} proved that if a biholomorphic map defined in an open set in $\mathbb{C}^2$ maps an open piece of a three dimensional sphere into another,
it is necessarily a rational map. This result was extended by Tanaka \cite{Tanaka} and then Alexander \cite{alexander1974} (for holomorphic maps) to real spheres in higher dimensions and Forstneri{\v{c}} \cite{forstnerivc1989} gave a uniform bound on the degree of the rational maps. Webster \cite{webster} then discovered a general algebraicity phenomenon for holomorphic mappings between open pieces of algebraic Levi nondegenerate hypersurfaces in the complex Euclidean $m$-space $\mathbb{C}^m$, $m \ge 2$. Webster's result leads to the following general problem:\\

{\it Under what conditions must a  holomorphic mapping
$f:\mathbb{C}^m \to \mathbb{C}^n$ sending a real algebraic set $A \subset \mathbb{C}^m$ onto another real algebraic set $A' \subset \mathbb{C}^n$ be algebraic ?} \\

Here, a subset $A \subset \mathbb{C}^m$ is a {\it real algebraic set} if it is defined by the vanishing of real-valued polynomials in $2m$ real variables and we shall always assume that $A$ is irreducible. The definition of algebraicity of holomorphic mappings can be found for example in
\cite{baouendi1996}. \\

When $m,n \ge 2$, there is a substantial literature related to the above problem, see for example, \cite{alexander1974},\cite{pinvcuk1975},
 \cite{forstnerivc1989}, \cite{huang1994},\cite{baouendi1996}, \cite{huang1998},\cite{merker2001}
 and \cite{mir2021}. In this paper, we will investigate this problem for the case $m=n=1$ which to the best of our knowledge, has not been studied in the literature, may be due to the fundamental difference between the geometric complex analysis in dimension $>1$ and the one-variable theory (see the comments in page 826 of \cite{coupet2005}). To be more precise, suppose $f$ is a meromorphic function in the complex plane $\mathbb{C}$ and $\mathcal{C}$ is a real algebraic curve in $\mathbb{R}^2 \cong \mathbb{C}$. We would like to know, for a real algebraic set $A \subset \mathcal{C}$, when $f(A)$ can lie in a real algebraic curve in $\mathbb{R}^2$. For the case that $f$ is a polynomial or rational function, it is known that $f(\mathcal{C})$ is lying in some real algebraic curve in $\mathbb{R}^2$. The following rationality result shows that $f$ has to be a rational function if $f(\mathcal{C})$ is assumed to be real algebraic and $\mathcal{C}$ is the unit circle $\mathbb{S}^1=\{z\in\mathbb{C}:|z|=1\}$. Moreover, the second part of the result can be considered as a one dimensional analog of Poincar\'e and Tanaka's rationality results (\cite{Poincare1907} and \cite{Tanaka}).

\begin{theorem}\label{thm-circle-case}
Let $f$ be a non-constant meromorphic function on $\mathbb{C}$ and $\mathbb{S}^1$ be the standard unit circle. Let $A$ be a non-degenerate continuum of $\mathbb{S}^1$. Assume that $f(A) \subset \mathcal{C}'$ where $\mathcal{C}'$ is a real algebraic curve defined by an irreducible real polynomial $P(x,y)$, then $f$ must be a rational function. Moreover, if $\mathcal{C}'=\mathbb{S}^1$, then the $f$ must be a quotient of two finite Blaschke products.
\end{theorem}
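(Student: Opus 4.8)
The plan is to use the Schwarz reflection of $f$ across the unit circle to turn the geometric hypothesis $f(A)\subset\mathcal{C}'$ into a genuine algebraic identity between two globally defined meromorphic functions, and then to read off rationality from the behaviour of these functions at $\infty$. First I would complexify the defining polynomial: writing $w=x+iy$ and substituting $x=(w+\bar w)/2$, $y=(w-\bar w)/(2i)$, the equation $P(x,y)=0$ becomes $Q(w,\bar w)=0$ for a polynomial $Q(u,v)$ in two complex variables, where the reality of $P$ forces $\overline{Q(u,v)}=Q(\bar v,\bar u)$. Since $\mathcal{C}'$ is a genuine (one-real-dimensional, irreducible) curve, $Q$ must depend non-trivially on $u$, so $\deg_u Q\ge 1$. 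Next I introduce the reflected function $f^*(z):=\overline{f(1/\bar z)}$; because $z\mapsto 1/\bar z$ is anti-holomorphic, $f^*$ is again meromorphic, and on $\mathbb{S}^1$ we have $1/\bar z=z$, so $f^*(z)=\overline{f(z)}$ there. The hypothesis $f(A)\subset\mathcal{C}'$ then reads $Q\bigl(f(z),f^*(z)\bigr)=0$ for $z\in A$. As $A$ is a non-degenerate continuum it has accumulation points, and $Q(f,f^*)$ is meromorphic near $A$, so the identity theorem upgrades this to the global identity
\[
Q\bigl(f(z),f^*(z)\bigr)\equiv 0 \qquad\text{on }\mathbb{C}.
\]

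The crux is to derive rationality from this identity. The key observation is that $f^*$ is meromorphic at $\infty$: writing $\widetilde f(w)=\overline{f(\bar w)}$, which is meromorphic on all of $\mathbb{C}$ since $f$ is, we have $f^*(z)=\widetilde f(1/z)$, and $\widetilde f$ is meromorphic at $0$, so $f^*$ has at worst a pole at $\infty$. Now I regard the identity as a polynomial equation $\sum_{j=0}^{d}a_j\bigl(f^*(z)\bigr)\,f(z)^{j}=0$ in $f(z)$, where the $a_j$ are the coefficients of $Q$ in its first variable and $d=\deg_u Q\ge 1$; the leading coefficient $a_d(f^*(z))$ is not identically zero, since $f^*$ is non-constant and $a_d$ is a nonzero polynomial. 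Thus on a punctured neighbourhood $\{|z|>R\}$ the function $f$ satisfies a polynomial equation whose coefficients are single-valued and meromorphic at $\infty$, i.e.\ $f$ is algebraic over the field of meromorphic germs at $\infty$. The main obstacle, and the real content of this step, is the following: a single-valued meromorphic function that is algebraic over this field cannot have an essential singularity, because its Puiseux expansion at $\infty$ has only finitely many terms of positive degree and, being unbranched, is an honest Laurent series in $1/z$ with at worst a pole. Hence $f$ extends meromorphically to $\infty$, so $f$ is meromorphic on $\widehat{\mathbb{C}}$ and therefore rational.

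For the final assertion I would specialise to $\mathcal{C}'=\mathbb{S}^1$, where $P(x,y)=x^2+y^2-1$ gives $Q(u,v)=uv-1$, so the global identity becomes $f(z)\,\overline{f(1/\bar z)}=1$. Evaluating on $\mathbb{S}^1$ (where $1/\bar z=z$) yields $|f(z)|=1$ on the circle, so the now-rational $f$ maps $\mathbb{S}^1$ into $\mathbb{S}^1$. To identify its form I would use $f^*=1/f$ to track zeros and poles: $a$ is a zero of $f$ if and only if $1/\bar a$ is a pole of $f$ of the same order, and no zero or pole can lie on $\mathbb{S}^1$. Dividing $f$ by $\prod_k (z-a_k)/(1-\bar a_k z)$ over its zeros $a_k$ then cancels all zeros and poles on $\widehat{\mathbb{C}}$, leaving a constant of modulus $1$ (since each factor, and $f$ itself, has modulus $1$ on the circle). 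Finally, grouping the factors with $|a_k|<1$ into one Blaschke product and rewriting each factor with $|a_k|>1$ as a unimodular constant times the reciprocal of a Blaschke factor (using $(z-a)/(1-\bar a z)=\tfrac{c}{\bar c}\,(1-\bar c z)/(z-c)$ with $c=1/\bar a$) exhibits $f$ as a quotient of two finite Blaschke products, as desired.
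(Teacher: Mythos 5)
Your proposal is correct, and both halves of it take routes that differ from the paper's in instructive ways, so a comparison is worthwhile. For rationality, the paper also forms $g(z)=\overline{f(1/\bar z)}$ and derives $Q(f,g)\equiv 0$, but it then solves for $g$ near the origin: writing $Q(f,g)=q(f)g^{n}+\cdots$, it applies an explicit Cauchy-type bound on polynomial roots (Lemma \ref{lemma-key}) to show $g$ has at worst a pole at $0$, which requires the normalization that $0$ is not a pole of $f$ and a two-case analysis according to whether $q(f(0))=0$. You instead solve for $f$ near $\infty$, exploiting the observation that $f^{*}$ is meromorphic at $\infty$, so that $f$ is algebraic over the field of germs meromorphic at $\infty$, and single-valuedness forces its Puiseux expansion to be an honest Laurent series with finite principal part; this swap of roles eliminates the paper's case distinction and normalization, at the cost of invoking the (standard, but worth spelling out) monodromy argument that a single-valued solution must follow a monodromy-invariant branch on $\{|z|>R\}$. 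One notational slip: the global identity $Q(f,f^{*})\equiv 0$ should be asserted on $\mathbb{C}\setminus\{0\}$, not $\mathbb{C}$, since for transcendental $f$ the function $f^{*}$ has an essential singularity at $0$ --- indeed that is precisely the possibility the theorem excludes at $\infty$; your argument only uses the identity near $\infty$, so nothing breaks. For the Blaschke statement the routes are genuinely different: the paper quotes Pakovich--Shparlinski's theorem on level curves $|p_1|=|p_2|=1$ and unwinds $f=b_2\circ q$, whereas you use the reflection identity $f\cdot f^{*}\equiv 1$ to pair zeros and poles across $\mathbb{S}^1$ and divide out Blaschke-type factors --- the classical, self-contained argument, which moreover upgrades $|f|=1$ from $A$ to all of $\mathbb{S}^1$ via the identity theorem, exactly what the factorization needs. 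One small patch there: your product runs over the finite zeros $a_k$, so if $f$ has a zero or pole at $\infty$ (paired under $a\mapsto 1/\bar a$ with one at $0$) you must include the factor $z$ or $1/z$ as well; with that, the quotient is zero- and pole-free on $\widehat{\mathbb{C}}$ and unimodular on $\mathbb{S}^1$, hence a unimodular constant, and your conclusion stands.
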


Notice that in Poincar\'e, Tanaka and Alexander's rationality results mentioned above, the mappings involved are only required to be biholomorphic or simply holomorphic in an open set containing a piece of a $2m-1$ dimensional sphere in $\mathbb{C}^m$. On the other hand, Theorem \ref{thm-circle-case} requires the map to be meromorphic in the whole complex plane. This is because the rationality result is not true if the map is defined only in an open set containing a piece of the circle $\mathbb{S}^1$ as can be seen from the following

\begin{example}\label{C-0}
Let $f_1(z)=\exp(z+\frac{1}{z})$ and $f_2(z)=\exp(-iz-\frac{i}{z})$. Then we have $f_1(\mathbb{S}^{1})$ lies in the real axis and $f_{2}(\mathbb{S}^{1})$ lies in the unit circle. Notice that $f_1$ and $f_2$ are holomorphic in $\mathbb{C}\backslash\{0\}$ and have an essential singularity at $0$.
\end{example}

The main tool of proving Theorem \ref{thm-circle-case} and other results in this paper are the Schwarz functions coined by Davis in \cite{Davis} (or the Schwarz reflection functions by Davis and Pollak \cite{DP-1958}). One can also find a generalization of Schwarz function to higher dimensions in \cite{shapiro1992}.\\

\noindent
{\bf Definition}. Let $\Gamma$ be a (non-singular) real-analytic Jordan arc or closed curve in $\mathbb{R}^2 \cong \mathbb{C}$. Suppose around  a point $a \in \Gamma$, $\Gamma$ can be represented in the form $F(z,\overline{z}) = 0$, where $F(z, w)$ is a holomorphic function of two
variables with $\frac{dF}{dw} \neq 0$. Then by the implicit function theorem, there is a unique (holomorphic) function $w = S_{\Gamma}$ in a neighborhood $N$ of $a$ such that $F(z, S_{\Gamma}(z))\equiv 0$ in $N$ and hence $S_{\Gamma}(z)=\overline{z}$ for $z$ on $\Gamma$. Moving along $\Gamma$, one obtains a
unique holomorphic function $S_{\Gamma}(z)$ in a neighborhood of $\Gamma$ which takes
the value $\overline{z}$ for $z$ on $\Gamma$. The function  $S_{\Gamma}$ or simply $S$ is called the {\it Schwarz function} or {\it Schwarz reflection function} for $\Gamma$.\\

From Chapter 6 of \cite{Davis}, we know that if $\Gamma$ is an arc of a circle with center at $z_0$ and radius $r$, then $S_{\Gamma}(z)=\overline{z_0}+\frac{r^2}{z-z_0}$. When $\Gamma$ is an arc of a straight line passing through $z_1$ and $z_2$ in $\mathbb{C}$, then $S_{\Gamma}(z)=\frac{\overline{z_1}-\overline{z_2}}{z_1-z_2}
(z-z_2)+\overline{z_2}$. These are the only possibilities for $S_{\Gamma}(z)$ to be a rational function (\cite{DP-1958} or pages 104-106 of \cite{Davis}).\\

We remark that Schwarz functions may have a branch point at infinity. If there exists no branch point at infinity, Shapiro \cite{shapiro1984} and Millar \cite{millar1990} have proven that if $\Gamma$ is a simple analytic closed curve such that $S_{\Gamma}$ is analytic outside $\Gamma$ (which is also analytic at infinity), then:
\begin{enumerate}
\item[(i)] when $S_{\Gamma}(z) = O(z)$ as $z \to \infty$, $\Gamma$ is an ellipse;
\item[(ii)] when $S_{\Gamma}(z) = O(1)$ as $z \to \infty$, $\Gamma$ is a circle;
\item[(iii)] when $S_{\Gamma}(z) = O(\frac{1}{z})$ as $z \to \infty$, $\Gamma$ is a circle centred at the origin.
\end{enumerate}

For a real algebraic planar curve $\mathcal{C}$, by construction $S_{\mathcal{C}}$ is an algebraic function (see Proposition \ref{prop-classification}). Then there exists a slit or simple curve $\gamma=\gamma(t): [0, \infty)\rightarrow\mathbb{C}$ tending to $\infty$ as $t$ goes to infinity  and each analytic branch of the Schwarz function $S_{\mathcal{C}}$ exists in $\mathbb{C}\backslash \gamma$. Here, we slightly abuse the notation and we still denote each branch of $S_{\mathcal{C}}$ in $\mathbb{C}\backslash \gamma$ by $S_{\mathcal{C}}$. Moreover, by the theory of Puiseux series, we have
$$
\lim_{z\rightarrow\infty, z\notin \gamma}S_{\mathcal{C}}(z)=c
$$
for some $c \in\mathbb{C}\cup\{\infty\}$.

From the expression of the Schwarz function for a circle, it is then clear that the following result generalizes the first part of Theorem \ref{thm-circle-case} from the unit circle $\mathbb{S}^1$ to a more general real planar algebraic curve $\mathcal{C}$.

\begin{theorem}\label{general curve}
Let $f$ be a meromorphic function on $\mathbb{C}$ and $\mathcal{C}$
be a real algebraic curve in $\mathbb{R}^2$. Assume that for some branch of $S_{\mathcal{C}}$,
\begin{equation}\label{condition a}
\lim_{z\rightarrow\infty, z\notin \gamma}S_{\mathcal{C}}(z)=c
\end{equation}
for some slit $\gamma$ and some $c\in \mathbb{C}$. Let $A$ be a non-degenerate continuum of $\mathcal{C}$. If $f(A)$ is contained in a real algebraic curve $\mathcal{C}'$ in $\mathbb{R}^2$, then $f$ must be a rational function.
\end{theorem}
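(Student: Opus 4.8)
The plan is to convert the geometric hypothesis $f(A)\subset\mathcal{C}'$ into a single functional equation relating $f$, its Schwarz reflection, and the two Schwarz functions, and then to read off the behaviour of $f$ at $\infty$ directly from \eqref{condition a}. Write $f^{*}(u):=\overline{f(\overline u)}$, which is again meromorphic on $\mathbb{C}$. First I would shrink $A$ to a non-degenerate subarc $A_{0}$ avoiding the finitely many singular points of $\mathcal{C}$ and mapped by $f$ into the smooth locus of $\mathcal{C}'$ (if $f(A)$ is a single point then $f$ is constant by the identity theorem and there is nothing to prove), so that both $S_{\mathcal{C}}$ on $A_0$ and $S_{\mathcal{C}'}$ on $f(A_0)$ are defined. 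On $A_{0}\subset\mathcal{C}$ one has $\overline z=S_{\mathcal{C}}(z)$, whence $\overline{f(z)}=f^{*}(\overline z)=f^{*}(S_{\mathcal{C}}(z))$; and since $f(z)\in\mathcal{C}'$ one also has $\overline{f(z)}=S_{\mathcal{C}'}(f(z))$. Equating these gives
\[
f^{*}\bigl(S_{\mathcal{C}}(z)\bigr)=S_{\mathcal{C}'}\bigl(f(z)\bigr),\qquad z\in A_{0}.
\]
Because $A_{0}$ is a non-degenerate real-analytic arc it has accumulation points in $\mathbb{C}$, so the identity theorem promotes this to an identity of analytic functions, which I may continue along the branch of $S_{\mathcal{C}}$ furnished by \eqref{condition a} throughout $\mathbb{C}\setminus\gamma$.

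Next I would extract the limit at infinity. Letting $z\to\infty$ with $z\notin\gamma$, hypothesis \eqref{condition a} gives $S_{\mathcal{C}}(z)\to c$ with $c$ finite; since $f^{*}$ is meromorphic it is continuous into $\hat{\mathbb{C}}$ at the finite point $c$, so $f^{*}(S_{\mathcal{C}}(z))\to f^{*}(c)=:d\in\hat{\mathbb{C}}$. Hence $S_{\mathcal{C}'}(f(z))\to d$. Now I invoke that $S_{\mathcal{C}'}$ is algebraic (Proposition~\ref{prop-classification}): if $Q(w,v)=0$ is the algebraic relation satisfied by $v=S_{\mathcal{C}'}(w)$, then $Q\bigl(f(z),S_{\mathcal{C}'}(f(z))\bigr)\equiv0$, and since $S_{\mathcal{C}'}$ is non-constant the equation $Q(w,d)=0$ has only finitely many roots $w_{1},\dots,w_{k}$ (the case $d=\infty$ being handled through the finitely many poles of $S_{\mathcal{C}'}$). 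Thus for $|z|$ large and $z\notin\gamma$ the value $f(z)$ is trapped in an arbitrarily small neighbourhood of the finite set $\{w_{1},\dots,w_{k}\}\cup\{\infty\}$. As $\{|z|>R\}\setminus\gamma$ is connected and $f$ is continuous, its image is connected and therefore lies in a single one of these shrinking disks; letting $R\to\infty$ and tracking the index across the nested domains forces $f(z)\to L$ for a single $L\in\hat{\mathbb{C}}$ as $z\to\infty$, $z\notin\gamma$.

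Finally I would rule out an essential singularity. If $\infty$ were essential for $f$, the Great Picard theorem supplies a value $w_{0}\neq L$, distinct from the at most two Picard exceptional values, with $f(p_{n})=w_{0}$ for a sequence $p_{n}\to\infty$. If infinitely many $p_{n}$ avoid $\gamma$, this directly contradicts $f(p_{n})\to L$; otherwise all but finitely many $p_{n}$ lie on the slit, and approaching such a $p_{n}$ of large modulus by off-slit points (where $f$ is within $\varepsilon$ of $L$), continuity of $f$ into $\hat{\mathbb{C}}$ forces $w_{0}=f(p_{n})=L$, again a contradiction. Hence $\infty$ is a pole or removable singularity of $f$, and $f$ is rational.

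I expect the main obstacle to be the middle step: passing from the clean limit $S_{\mathcal{C}'}(f(z))\to d$ for the composite to a genuine limit for $f$ itself. This is where the multivaluedness and branch points of the algebraic function $S_{\mathcal{C}'}$, the possibility $d=\infty$, and the presence of the slit $\gamma$ must all be controlled at once; the finiteness of the fibres of $S_{\mathcal{C}'}$ together with the connectedness of $\{|z|>R\}\setminus\gamma$ is what makes the trapping argument work, while the slit cannot simply be ignored but must be absorbed by the continuity argument of the last paragraph.
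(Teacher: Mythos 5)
Your proposal is correct in substance, and its second half takes a genuinely different route from the paper's. The first half coincides with the paper: both arguments use the Davis--Pollak relation to produce the single-valued function $g(z)=\overline{f(\overline{S_{\mathcal{C}}(z)})}$ on $\mathbb{C}\setminus\gamma$ satisfying the polynomial identity $Q(f(z),g(z))\equiv 0$, and both use (\ref{condition a}) together with spherical continuity of $f$ at $\overline{c}$ to get $g(z)\to d:=\overline{f(\overline{c})}\in\mathbb{C}\cup\{\infty\}$. From there the paper assumes $f$ transcendental and applies the \emph{Little} Picard theorem to $F(z)=f(z^m)$: for infinitely many non-exceptional values $a$ it finds sequences along which $F=a$, concluding $Q(a,d)=0$ for infinitely many $a$, which forces $(v-d)\mid Q(u,v)$ and contradicts the irreducibility of $Q$ when $d$ is finite, with the root-bound lemma (Lemma \ref{lemma-key}) handling the pole case $d=\infty$. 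You instead show that the cluster set of $f$ at infinity off the slit is contained in the finite set $E=\{w:Q(w,d)=0\}\cup\{\infty\}$ and then exclude an essential singularity by the \emph{Great} Picard theorem, absorbing preimages lying on $\gamma$ by continuity. Your route yields a stronger intermediate statement (actual limiting behaviour of $f$ near $\infty$) at the price of topological care around the slit, where the paper's contradiction is purely algebraic; you also get to skip the paper's Puiseux substitution $z\mapsto z^m$, since (\ref{condition a}) already hands you a single-valued branch on $\mathbb{C}\setminus\gamma$.

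Three repairs are needed, none fatal. First, your assertion that $\{|z|>R\}\setminus\gamma$ is connected can fail: the slit may re-enter the disk $\{|z|>R\}$, and its subarcs then form crosscuts separating the region, so the single limit $L$ need not exist as you derive it. But this step is dispensable: in the final argument choose $w_{0}$ outside the whole finite set $E$ (and the at most two exceptional values); then off-slit preimages eventually lie in small neighbourhoods of $E$, on-slit preimages are handled by your continuity argument, and the same contradiction goes through without any single limit. Second, the claim that $Q(w,d)=0$ has only finitely many roots requires $Q$ to be the irreducible (minimal) polynomial of the algebraic function $S_{\mathcal{C}'}$; otherwise $(v-d)$ could divide $Q$ even though $S_{\mathcal{C}'}$ is non-constant. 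The paper makes this reduction explicit, and you should too. Third, $S_{\mathcal{C}'}(f(z))$ is not a globally well-defined branch on $\mathbb{C}\setminus\gamma$, since $f(z)$ may wind around branch points of $S_{\mathcal{C}'}$; the continuation must be phrased, as you in effect do when invoking $Q$, through the single-valued $g$ and the identity $Q(f,g)\equiv 0$ rather than through the composite $S_{\mathcal{C}'}\circ f$ itself.
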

\begin{example}
 From \cite{Davis}, we know that the  Schwarz functions of the family of Rose curves $$R_{2m}: r^{2m}=a+b\cos 2m \theta, 0<|b|<a, m=1, 2, \dots$$
are given by
\begin{equation*}
S_{R_{2m}}(z)=z\left(\frac{a+\sqrt{a^2-b^2+2bz^{2m}}}{2z^{2m}-b}\right)^{\frac{1}{m}},
\end{equation*}
which satisfies the condition (\ref{condition a}) in Theorem \ref{general curve}.
\end{example}

Recall that the Schwarz function for a line passing through $z_1$ and $z_2$ in $\mathbb{C}$ is given by $S(z)=\frac{\overline{z_1}-\overline{z_2}}{z_1-z_2}(z-z_2)+\overline{z_2}$ which does not satisfy condition (\ref{condition a}). The following example shows that condition (\ref{condition a}) in Theorem \ref{general curve} is essential.

\begin{example}\label{exp}
Notice that $S_{\mathbb{R}}(z)=z$. Therefore any transcendental meromorphic function which maps the real axis into itself will be an example showing that condition (\ref{condition a}) is essential. For example, the exponential function $\exp(z)$ maps the real axis to $(0,+\infty)$ which is contained in the real algebraic curve $y=0$. For another example, let $\wp$ be the Weierstrass elliptic function which satisfies the differential equation $(\wp')^2=4\wp^3-g_2\wp-g_3$. By Theorem 3.16.2 in \cite{jones1987}, if $g_2,g_3 \in \mathbb{R}$, then $\wp$ is a real function, i.e. $\wp(\overline{z})=\overline{\wp(z)}$. Hence $\wp$ maps the real axis to the real axis.
\end{example}

The functions in Example \ref{exp} belong to the so-called class $W$ (like Weierstrass),
which consists of elliptic functions,
rational functions of one exponential $\exp(\alpha z), \alpha \in\mathbb{C}\backslash\{0\}$
and rational functions in $z$. These are the only meromorphic functions which satisfy the addition formulae. They are also the only meromorphic solutions of certain non-linear complex differential equations (see \cite{eremenko2009},\cite{conte2010}, \cite{conte2012}, \cite{conte2022}, \cite{ng2019} and \cite{gu2020}).

From Examples \ref{exp}, one may ask if there exists some non-rational functions in class $W$ that can map real irreducible planar algebraic sets (other than the straight lines) to another real planar algebraic set. The following result shows that this is impossible.

\begin{theorem}\label{T3}
Let $f$ be a transcendental meromorphic function in class $W$ and $\mathcal{A}$, $\mathcal{B}$ be two real planar algebraic curves. Let $A$ be a non-degenerate continuum of $\mathcal{A}$. Assume  that $f(A)\subset \mathcal{B}$, then $A$ is a straight line segment.
\end{theorem}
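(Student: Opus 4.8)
The plan is to convert the containment $f(A)\subset\mathcal B$ into a functional identity relating the Schwarz functions of $\mathcal A$ and $\mathcal B$ through $f$, and then to exploit the periodicity forced by membership in class $W$. Writing $f^*(z):=\overline{f(\bar z)}$ for the Schwarz conjugate of $f$ (so that $f^*$ is again a transcendental member of class $W$, elliptic or rational in one exponential according to the type of $f$), I first record that for $z\in A$ one has $\bar z=S_{\mathcal A}(z)$ and $\overline{f(z)}=f^*(\bar z)=f^*(S_{\mathcal A}(z))$, while $f(z)\in\mathcal B$ gives $\overline{f(z)}=S_{\mathcal B}(f(z))$. Hence
\[
S_{\mathcal B}\bigl(f(z)\bigr)=f^*\bigl(S_{\mathcal A}(z)\bigr)\qquad(z\in A).
\]
Since $A$ is a non-degenerate continuum it has accumulation points, so by the identity theorem this equality persists, branch by branch, as an identity of analytic functions wherever the two sides continue.

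The next step is to activate periodicity. A transcendental member of class $W$ is either elliptic or a rational function of a single exponential, so $f$ admits a nonzero period $\omega$ (two $\mathbb R$-independent periods $\omega_1,\omega_2$ in the elliptic case). Replacing $z$ by $z+\omega$ in the identity and using $f(z+\omega)=f(z)$ yields
\[
f^*\bigl(S_{\mathcal A}(z+\omega)\bigr)=f^*\bigl(S_{\mathcal A}(z)\bigr).
\]
Thus for every $z$ the two algebraic quantities $S_{\mathcal A}(z+\omega)$ and $S_{\mathcal A}(z)$ lie in the same fibre of the periodic function $f^*$. I would then argue that the difference $D(z):=S_{\mathcal A}(z+\omega)-S_{\mathcal A}(z)$, being itself an algebraic function, is forced to be constant and equal to a period of $f^*$: since $f^*$ has only isolated critical values, for generic $z$ the local solutions of $f^*(u)=f^*(v)$ are either translations by a period or one of the finitely many deck symmetries of the covering $f^*$, and monodromy together with connectedness pins down a single such branch.

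In the translation branch one obtains $S_{\mathcal A}(z+\omega)=S_{\mathcal A}(z)+c$ with $c$ a constant period of $f^*$; in the elliptic case the same holds for both $\omega_1,\omega_2$. Choosing $\beta$ with $\beta\omega=c$ (resp.\ $\beta\omega_j=c_j$) makes $T(z):=S_{\mathcal A}(z)-\beta z$ an $\omega$-periodic (resp.\ doubly periodic) algebraic function; since an algebraic function has only finitely many branch points and at most polynomial growth, periodicity forces $T$ to be single-valued, hence rational, and finally constant. Therefore $S_{\mathcal A}$ is affine, and by the classification of curves with rational Schwarz functions (a degree-one polynomial Schwarz function occurs only for a straight line, the circle producing the non-affine $\bar z_0+r^2/(z-z_0)$) the curve $\mathcal A$ is a line, so the continuum $A$ is a straight line segment.

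The main obstacle is the middle step: upgrading ``$S_{\mathcal A}(z+\omega)$ and $S_{\mathcal A}(z)$ lie in a common $f^*$-fibre'' to the genuine translation relation $S_{\mathcal A}(z+\omega)=S_{\mathcal A}(z)+c$. Two difficulties must be controlled at once, namely the multivaluedness of $S_{\mathcal A}$ (so that one is really comparing branches along the slit $\gamma$) and the possible nontrivial deck symmetries of $f^*$ (for instance $u\mapsto -u$ when $f^*$ is even). I expect to resolve this by examining the behaviour at infinity: the Puiseux expansion of the algebraic function $S_{\mathcal A}$ determines the limit of $D(z)$ as $z\to\infty$ off $\gamma$, and matching this against the discrete period lattice rules out the non-translation branches and fixes $c$; in the elliptic case playing the two independent periods against one another also forces the consistency $c_1/\omega_1=c_2/\omega_2$ needed for a single $\beta$, precisely where the growth bound on $S_{\mathcal A}$ enters. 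The rational-in-exponential case is handled identically, with $f^*(z)=R^*(e^{\bar\alpha z})$ and single period $2\pi i/\bar\alpha$.
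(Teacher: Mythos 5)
Your setup (the Davis--Pollak identity turning $f(A)\subset\mathcal{B}$ into a functional equation, then exploiting the periodicity of class $W$) and your endgame (affine $S_{\mathcal{A}}$ with $|a|=1$ forces a line segment) both match the paper. The genuine gap is exactly the step you flag as the main obstacle, and your proposed repair does not close it. From $f^*(S_{\mathcal{A}}(z+\omega))=f^*(S_{\mathcal{A}}(z))$ you cannot conclude that $S_{\mathcal{A}}(z+\omega)$ and $S_{\mathcal{A}}(z)$ differ by a period up to ``finitely many deck symmetries'': for an elliptic $f^*$ of degree $d\ge 2$ the fibre correspondence $\{(u,v): f^*(u)=f^*(v)\}$ is an algebraic curve with $d$ local branches $v(u)$ over a generic $u$, and apart from the lattice translates of the diagonal these branches are generically not rigid motions or automorphisms at all (for $\wp$ the extra branch $v=-u+\lambda$ happens to be rigid only because $\wp$ is even; a generic degree-three elliptic function has non-graph irreducible components). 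So ``translation or deck symmetry'' is a false dichotomy, and $D(z)=S_{\mathcal{A}}(z+\omega)-S_{\mathcal{A}}(z)$ has no a priori reason to be constant; matching a Puiseux limit of $D$ at infinity against the lattice cannot rule out that the pair $(S_{\mathcal{A}}(z),S_{\mathcal{A}}(z+\omega))$ travels along a non-translation component of the correspondence at finite points. There is also a smaller unaddressed branch issue one step earlier: replacing $z$ by $z+\omega$ in $S_{\mathcal{B}}(f(z))=f^*(S_{\mathcal{A}}(z))$ requires the branch of $S_{\mathcal{B}}$ continued along the closed loop $f$ traces from $z$ to $z+\omega$ to return to itself, which is not automatic.

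The paper's mechanism, which is the missing idea, avoids both problems. It encodes $f(L)\subset\mathcal{B}$ by an irreducible polynomial identity $Q(f(z),g(z))\equiv 0$ with $g(z)=\overline{f(\overline{S_{\mathcal{A}}(z)})}$ single-valued and meromorphic off one slit, so $S_{\mathcal{B}}$ and its monodromy never reappear. Then a counting argument in the style of Beardon--Ng: at the $nd+1$ translates $z_0+m\omega_1$ (where $n=\deg_v Q$ and $d=\deg f$ on the torus) the values $g(z_0+m\omega_1)$ are all roots of the fixed polynomial $Q(f(z_0),v)$, which has at most $n$ distinct roots, so by pigeonhole at least $d+1$ translates share a common $g$-value; since there are only finitely many possible index sets but uncountably many admissible $z_0$, the identity theorem upgrades this to identities $g(z+m_k\omega_1)\equiv g(z+m_l\omega_1)$; and because $f$ has degree $d$, among these $d+1$ coincident translates some pair satisfies $\overline{S_{\mathcal{A}}(z+n_1\omega_1)}-\overline{S_{\mathcal{A}}(z+n_2\omega_1)}\in\Lambda$, whence discreteness of $\Lambda$ plus connectedness makes this difference a constant lattice vector. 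Note that one period direction suffices: the translation identity lets one continue $S_{\mathcal{A}}$ across the slit, an entire algebraic function is a polynomial, and a polynomial with periodic derivative is affine --- so the two-period consistency $c_1/\omega_1=c_2/\omega_2$ you were worried about is never needed. Your final step (a periodic algebraic function being constant) is fine in outline, but without the pigeonhole-over-many-translates argument the proof does not go through.
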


Theorem \ref{T3} shows that for any transcendental $f$ in class $W$, $f(\mathbb{S}^1)$ cannot be part of a real planar algebraic curve.

\section{Proofs of Theorems}

\noindent
{\it Proof of Theorem \ref{thm-circle-case}}.
Theorem \ref{general curve} implies that $f$ is rational. Here we also include an elementary proof for the case of $\mathbb{S}^1$.  We may assume that $0$ is not a pole of $f$ for otherwise we can replace $f$ by $\frac{1}{f}$ and $A$ by the set $\{\frac{1}{z}: z \in A\}$ in the following arguments. Note that $\mathcal{C}'$ has finitely many singular point and $f$ is non-constant meromorphic. So we can find some open arc $L \subset A$ such that $L$ contains no poles and critical points of $f$ and hence $B=f(L)$ is an analytic arc which contains no singular points of $\mathcal{C}'$.

Let $S_L$ and $S_B$ be the Schwarz functions associated with $L$ and $B$ respectively. Then from (9.3) of Davis and Pollak's paper \cite{DP-1958}, we have
$$f(\overline{S_L(z)})=\overline{S_B(f(z))}$$ in a neighborhood $N$ of $L$. Notice that as $L \subset \mathbb{S}^1$, its Schawrz function is $S_L(z)=\frac{1}{z}$. Hence we have in $N$,
$$\overline{f(\frac{1}{\overline{z}})}=S_B(f(z)).$$ Since $f$ is meromorphic in $\mathbb{C}$, $g(z):=\overline{f(\frac{1}{\overline{z}})}$ is meromorphic in $\mathbb{C}\backslash\{0\}$ and it follows that $S_B(f(z))$ is meromorphic in $N$.

To get the Schwarz function of $B$, we rewrite $P(x, y)= 0$ as $Q(z,\overline{z})=0$ for some $Q \in \mathbb{C}[z,\overline{z}]$. Then $f(L) \subset \mathcal{C}'$ implies that $$Q(f(z),S_B(f(z))\equiv 0$$ in $N$. Thus we have $Q(f(z),g(z))\equiv 0$ in $N$ and hence in the (connected) subset of $\mathbb{C}\backslash\{0\}$ where both $f$ and $g$ are analytic.

If $Q$ depends on $z$ only, then $f$ will be a single valued algebraic function in $\mathbb{C}$ and hence a constant which is impossible. So we may let  $Q(f,g)=q(f)g^n + \cdots$ where $n \ge 1$ and $q$ is a polynomial. We first consider the case $q(f(0))\neq 0$. Then we have $0<c<|q(f(z))|<d$ in some closed disk $\overline{\mathbb{D}(0;r)}$. By choosing $r$ sufficiently small, we also have $Q(f(z),g(z))\equiv 0$ in $\overline{\mathbb{D}(0;r)}\backslash \{0\}$. Then it follows from Lemma \ref{lemma-key} below that $|g(z)|$ is bounded above in $\overline{\mathbb{D}(0;r)}\backslash \{0\}$ and hence $g$ has a removable singularity at $0$. This will force $f$ to have a removable singularity  at infinity so that $f$ must be rational.

\begin{lemma}\cite[Theorem 8.13, Corollary 8.1.8]{RS-2002}\label{lemma-key}
Let $p(z) =a_nz^n+\cdots+a_1z+a_0$ be a polynomial of degree $n$. Then all the zeros of $p$ lie in the closed disk $\overline{\mathbb{D}(0;\rho)}$ where $$\rho \le \max_{0 \le i \le n-1} \Big(n|\frac{a_i}{a_n}|\Big)^{\frac{1}{n-i}}$$
\end{lemma}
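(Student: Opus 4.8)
The plan is to prove the contrapositive statement at the level of a single point: I will show that every $z$ with $|z| > \rho$ satisfies $p(z) \neq 0$, which immediately forces all zeros to lie in $\overline{\mathbb{D}(0;\rho)}$. The entire content of the hypothesis is a coefficient-wise bound, so first I would unpack the definition of $\rho$. Since $\rho = \max_{0 \le i \le n-1}\big(n|\tfrac{a_i}{a_n}|\big)^{1/(n-i)}$, for each index $i \in \{0,\dots,n-1\}$ we have $\big(n|\tfrac{a_i}{a_n}|\big)^{1/(n-i)} \le \rho$, which rearranges to
$$
|a_i| \le \frac{|a_n|}{n}\,\rho^{\,n-i}, \qquad 0 \le i \le n-1.
$$

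Next I would estimate $|p(z)|$ from below by isolating the leading term and applying the reverse triangle inequality:
$$
|p(z)| \ge |a_n|\,|z|^n - \sum_{i=0}^{n-1}|a_i|\,|z|^i.
$$
Substituting the coefficient bound above, the subtracted tail is at most $\tfrac{|a_n|}{n}\sum_{i=0}^{n-1}\rho^{\,n-i}|z|^i$. The key observation is that when $|z| > \rho$ and $0 \le i \le n-1$, the exponent satisfies $n-i \ge 1$, so $\rho^{\,n-i} < |z|^{\,n-i}$, and hence $\rho^{\,n-i}|z|^i < |z|^n$. Summing the $n$ such strict inequalities gives $\sum_{i=0}^{n-1}\rho^{\,n-i}|z|^i < n\,|z|^n$, so the tail is strictly less than $|a_n|\,|z|^n$. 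Combining with the lower bound yields $|p(z)| > 0$, so $z$ cannot be a root.

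There is no serious obstacle in this argument, as it is an elementary Cauchy-type bound; the only point requiring care is preserving the strict inequality in the key step, which depends on two facts working together: that the leading term $i = n$ is excluded from the tail (so every summand has $n - i \ge 1$), and that the hypothesis $|z| > \rho$ is strict. This is precisely why the conclusion is stated as the non-strict bound $|z| \le \rho$ on the closed disk $\overline{\mathbb{D}(0;\rho)}$, rather than a strict one.
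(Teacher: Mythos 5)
Your proof is correct, and there is nothing in the paper to compare it against: the paper states this lemma as a citation to Rahman and Schmeisser \cite[Theorem 8.1.3, Corollary 8.1.8]{RS-2002} and gives no proof of its own. Your argument is the standard one for Cauchy-type zero bounds: unpack the definition of $\rho$ into the coefficient inequalities $|a_i| \le \frac{|a_n|}{n}\rho^{n-i}$, bound $|p(z)|$ below via the reverse triangle inequality, and use $|z| > \rho \ge 0$ together with $n-i \ge 1$ to make each tail term strictly smaller than $|z|^n$, so the $n$ of them sum to strictly less than $n|z|^n$ and $|p(z)| > 0$. One small point handled correctly but worth noting: the lemma as printed is awkwardly phrased (``$\rho \le \max\dots$''), and you rightly read it as asserting that all zeros lie in the closed disk whose radius \emph{equals} the stated maximum, which is exactly what your argument establishes.
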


Now suppose $q(f(0))=0$, then we can write $q(f(z))=z^mf_0(z)$ where $m \in \mathbb{N}$, $f_0$ is meromorphic in $\mathbb{C}$ with $f_0(0) \neq 0$. Then we may assume that $0<c_0<|f_0(z)|<d_0$ in some closed disk $\overline{\mathbb{D}(0;r_0)}$ with $r_0 <1$. We will also let $f(z)=z^kf_1(z)$ where $k \ge 0$ and $f_1$ is meromorphic in $\mathbb{C}$ with $f_1(0) \neq 0$. By Lemma \ref{lemma-key}, we have for $z\in \overline{\mathbb{D}(0;r_0)}$,
$$|g(z)| \le \frac{C}{|z|^{m}}$$
for some positive constant $C$. Since $g(z)=\frac{1}{z^k}\overline{f_1(\frac{1}{\overline{z}})}$, it follows that
$$|\overline{f_1(\frac{1}{\overline{z}})}| \le \frac{C}{|z|^{m-k}}$$
in $\overline{\mathbb{D}(0;r_0)}$. 
It follows that $|\frac{f_1(w)}{w^{m-k}}|\le C$ for all large $w$. Then the meromorphic function $\frac{f_1(w)}{w^{m-k}}$ has a removable singularity at infinity and hence $\frac{f_1(w)}{w^{m-k}}$ is rational. Therefore $f_1$ and hence $f$ must be rational.\\

Now if $\mathcal{C}'=\mathbb{S}^1$, then we need to show that the rational function $f$ must be a quotient of two finite Blaschke products. This will follow from the following result
of Pakovich and Shparlinski \cite[Theorem 2.2]{PS-2020}.

\begin{lemma}[\cite{PS-2020}]
Let $p_1$ and $p_2$ be two one variable complex rational functions of degrees $n_1$ and $n_2$ respectively. Then
$$|\{z \in \mathbb{C} : |p_1(z)| = |p_2(z)| = 1\}| \le (n_1 + n_2)^2,$$
unless $p_1 = b_1 \circ q$ and $p_2 =b_2 \circ q$ for some quotients of finite Blaschke products $b_1$ and $b_2$ and rational function $q$.
\end{lemma}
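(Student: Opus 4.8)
The plan is to convert the two modulus conditions into an algebraic intersection problem and then to split according to whether the two associated curves share a component. Writing $p_i=P_i/R_i$ in lowest terms and letting $\tilde P,\tilde R$ denote the polynomials obtained from $P,R$ by conjugating their coefficients, the condition $|p_i(z)|=1$ is equivalent to $F_i(z,\bar z)=0$, where $F_i(z,w):=P_i(z)\tilde P_i(w)-R_i(z)\tilde R_i(w)$ is a polynomial of bidegree $(n_i,n_i)$ satisfying the Hermitian symmetry $\overline{F_i(\bar w,\bar z)}=F_i(z,w)$. Thus the set $\{z:|p_1(z)|=|p_2(z)|=1\}$ is exactly the ``real'' slice $\{w=\bar z\}$ of the affine variety $V=\{F_1=F_2=0\}\subset\mathbb{C}^2$.

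First I would treat the case where $F_1$ and $F_2$ have no common factor. Then the resultant $\mathrm{Res}_w(F_1,F_2)$ is a nonzero polynomial in $z$, and since $\deg_w F_i\le n_i$ and $\deg_z F_i\le n_i$ one gets $\deg_z\mathrm{Res}_w(F_1,F_2)\le n_1n_2+n_2n_1=2n_1n_2$. Every point of $V$ projects to a root of this resultant, so $V$, and in particular its real slice, has at most $2n_1n_2\le(n_1+n_2)^2$ points. This is just B\'ezout's bound on $\mathbb{P}^1\times\mathbb{P}^1$ and already yields the stated estimate with room to spare; it also shows that the exceptional alternative is only needed when the set is infinite.

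It remains to analyze the infinite case, equivalently the case where $F_1,F_2$ share a factor, so that $V$ contains a curve whose real slice contains a nonsingular real-analytic arc $\Gamma$ on which $|p_1|=|p_2|=1$. By L\"uroth's theorem I would choose $q$ with $\mathbb{C}(p_1,p_2)=\mathbb{C}(q)$ and write $p_i=b_i\circ q$; a short minimality argument shows $\mathbb{C}(b_1,b_2)=\mathbb{C}(t)$, so there is a rational $\Phi$ with $t=\Phi(b_1(t),b_2(t))$ identically, and likewise $t=\tilde\Phi(\tilde b_1(t),\tilde b_2(t))$ for the coefficient-conjugated data. Setting $\Gamma'=q(\Gamma)$, a real-analytic arc in the $q$-plane with Schwarz function $S'$ on which $|b_1|=|b_2|=1$, the relation $|b_i|=1$ reads $\tilde b_i(S'(t))=1/b_i(t)$ and propagates by analytic continuation. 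Substituting this into $t=\tilde\Phi(\tilde b_1(t),\tilde b_2(t))$ evaluated at $S'(t)$ gives $S'(t)=\tilde\Phi(1/b_1(t),1/b_2(t))$, a rational function of $t$. Hence $\Gamma'$ has a rational Schwarz function, so by the Davis--Pollak classification recalled above (circles and lines are the only curves with rational Schwarz function) $\Gamma'$ is a circle or a line. A rational function with unimodular boundary values on a circle or line is, after the M\"obius map normalizing that circle to $\mathbb{S}^1$, a quotient of finite Blaschke products, since the functional equation forces its zeros and poles to be paired by reflection; absorbing this M\"obius map into $q$ gives $p_i=b_i\circ q$ with $b_1,b_2$ quotients of finite Blaschke products.

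The main obstacle is this last case. The counting step is essentially B\'ezout and routine, but extracting the Blaschke structure from a single common component requires (i) the L\"uroth reduction to kill any common compositional factor of $b_1,b_2$, without which one only learns that some image $h(\Gamma')$ is a circle, and (ii) the observation that $\mathbb{C}(b_1,b_2)=\mathbb{C}(t)$ is precisely what upgrades the Schwarz function of $\Gamma'$ from algebraic to rational. Care is also needed to guarantee that the common component genuinely meets the real locus $\{w=\bar z\}$ in an arc, so that a bona fide Schwarz function exists, and to track the M\"obius normalization consistently through the factorization.
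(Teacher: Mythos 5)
The paper itself contains no proof of this lemma: it is imported verbatim from \cite{PS-2020} (Theorem 2.2 there), so your attempt has to be measured against Pakovich--Shparlinski's argument rather than anything in this text. Your treatment of the infinite case is correct and close in spirit to theirs: pass to an analytic arc $\Gamma$ on which $|p_1|=|p_2|=1$, take a L\"uroth generator $q$ of $\mathbb{C}(p_1,p_2)$ so that $\mathbb{C}(b_1,b_2)=\mathbb{C}(t)$, conjugate the identity $t=\Phi(b_1(t),b_2(t))$ and combine it with $\tilde b_i(S'(t))=1/b_i(t)$ to conclude that the Schwarz function $S'$ of $q(\Gamma)$ is \emph{rational}, then invoke the Davis--Pollak classification (recalled in this paper from \cite{DP-1958}, \cite{Davis}) to get a circle or line, and finally absorb the normalizing M\"obius map into $q$ so that $b_1,b_2$ become quotients of finite Blaschke products. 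Each step there checks out, and the coprime-case resultant bound $2n_1n_2\le(n_1+n_2)^2$ is also fine.

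The genuine gap is the one you flag but do not close, and it is not cosmetic: your parenthetical claim that the B\'ezout step ``shows that the exceptional alternative is only needed when the set is infinite'' is false as stated, because a common factor of $F_1,F_2$ need not meet the locus $\{w=\overline{z}\}$ in an arc at all. Concretely, take $p_1(z)=z^2$ and $p_2(z)=(z^2+\sqrt{2}z+1)/(z^2-\sqrt{2}z+1)$: then $F_1(z,w)=(zw-1)(zw+1)$ and $F_2(z,w)=2\sqrt{2}\,(z+w)(zw+1)$, so $\gcd(F_1,F_2)=zw+1$, whose real slice $|z|^2=-1$ is empty, while $E=\{\pm i\}$ is finite. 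So in the non-coprime case you still owe a proof that $|E|\le(n_1+n_2)^2$, and this is exactly where the specific constant is consumed, not ``room to spare''. The repair: write $F_i=HG_i$ with $H=\gcd(F_1,F_2)$ of bidegree $(a,b)$; the Hermitian symmetry $F_i^{\sharp}=F_i$ forces $H^{\sharp}=\lambda H$ with $|\lambda|=1$, so the slice of $H$ is the zero set of a \emph{single} real polynomial of degree $a+b$, and if that set is finite its points are isolated components, bounded via Harnack by $(a+b-1)(a+b-2)/2+1$; the remaining points of $E$ satisfy $G_1=G_2=0$ and are bounded by the resultant degree $(n_1-a)(n_2-b)+(n_2-a)(n_1-b)$. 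One checks that the sum of these two bounds is at most $(n_1+n_2)^2$ for all $0\le a,b\le\min(n_1,n_2)$, with equality approached when $F_1$ is proportional to $F_2$. With this counting inserted, $|E|>(n_1+n_2)^2$ genuinely forces the slice of $H$ to be infinite, hence (being real algebraic) to contain a nonsingular analytic arc on which both moduli are $1$, and your second part then completes the proof.
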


As $\mathcal{C}'=\mathbb{S}^1$, We can take $p_1(z)=z$, $p_2=f$ and since the infinite set $A$ is a  subset of $\{z \in \mathbb{C} : |p_1(z)| = |p_2(z)| = 1\}$, we must have $z = b_1 \circ q$ and hence the degree of the rational function $q$ is one and thus $q$ is a M\"obius transformation.
From $z=b_1 \circ q$, we deduce that $q^{-1}=b_1$ and hence $|q^{-1}(z)|=1$ whenever $|z|=1$ as $b_1$ is a quotient of finite Blaschke products. Hence $q$ must be a degree one  finite Blaschke product. Notice that $f=b_2 \circ q = \frac{B_1}{B_2} \circ q$ where $B_1$ and $B_2$ are finite Blaschke products. Since a composition of two finite Blaschke products is still a finte Blaschke product, $f$ will be a quotient of finite Blaschke products.{\hfill $\Box$\par\vspace{2.5mm}}

\medskip

 To prove Theorem \ref{general curve}, we need the following proposition.

\begin{prop}\label{prop-classification}
Let $A$ be a real algebraic set in $\mathbb{R}^2$. Then the Schwarz reflection function $S_{A}$ is an algebraic function. Moreover, if $A$ is contained in an irreducible real algebraic planar curve $\mathcal{C}$, then a branch of $S_A$ coincides with a branch of $S_{\mathcal{C}}$ in the complement of some slit in $\mathbb{C}$.
\end{prop}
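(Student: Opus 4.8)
The plan is to treat the two assertions separately. For the first I exhibit an explicit polynomial relation satisfied by $S_A$; for the second I argue that the germ of the Schwarz function at a generic point of $A$ is determined by the arc alone, and then propagate the resulting equality by the identity theorem.

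First I complexify. Writing the one-dimensional locus of $A$ as the zero set of a real polynomial $P(x,y)$ and substituting $x = (z+w)/2$ and $y = (z-w)/(2i)$ yields a polynomial $Q(z,w) \in \mathbb{C}[z,w]$ with $Q(z,\overline{z}) = P(x,y)$ for $z = x+iy$, so that $Q(z,\overline{z}) \equiv 0$ along $A$. At a non-singular point $a$ of $A$ the condition $\nabla P(a) \neq 0$ translates, by the routine identity $\partial Q/\partial w = \tfrac{1}{2}(P_x + iP_y)$, into $\partial Q/\partial w \neq 0$ at $(a,\overline{a})$, so $\deg_w Q \geq 1$ and, by the construction recalled in the Definition, the Schwarz function is precisely the branch $w = S_A(z)$ of $Q(z,w)=0$ satisfying $S_A(z) = \overline{z}$ on $A$. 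Since $Q(z,S_A(z)) \equiv 0$ with $Q$ a genuine polynomial in both variables, $S_A$ is by definition an algebraic function; all of its analytic continuations are roots of the same $Q$, so it has only finitely many branch points and each branch is single-valued on the complement of a suitable slit.

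For the second assertion the key point is that $S_\Gamma$ is characterised as the unique holomorphic function equal to $\overline{z}$ along $\Gamma$, so its germ depends on the arc, not on the chosen defining polynomial. I would pick a point $a \in A$ that is non-singular for $\mathcal{C}$; this is possible because $\mathcal{C}$ has only finitely many singular points while $A$ is one-dimensional. Near such $a$ the irreducible curve $\mathcal{C}$ is a single real-analytic arc, and since $A \subset \mathcal{C}$ is a real-analytic arc through $a$, the two arcs coincide in a neighbourhood of $a$. Hence $S_A$ and $S_{\mathcal{C}}$ both equal $\overline{z}$ on this common arc, and by uniqueness $S_A \equiv S_{\mathcal{C}}$ near $a$. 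As both are branches of algebraic functions, holomorphic on the complement of a slit $\gamma$ joining the finitely many branch points to $\infty$, the identity theorem upgrades this local equality to equality on all of $\mathbb{C} \setminus \gamma$.

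The step I expect to be the main obstacle is the local coincidence of $A$ and $\mathcal{C}$ as arcs, that is, excluding the possibility that the defining equation of $A$ carves out a strictly smaller set near $a$. This is exactly where irreducibility enters: a one-dimensional irreducible $A$ lying inside the irreducible curve $\mathcal{C}$ cannot meet a non-singular arc of $\mathcal{C}$ in a lower-dimensional subset, so the two arcs must agree at a generic point, after which matching the branches is routine. A minor technical wrinkle is that the complexified polynomial $Q$ can be reducible over $\mathbb{C}$ even when $P$ is irreducible over $\mathbb{R}$; this is harmless, since one only needs the irreducible factor of $Q$ carrying the germ of $S_A$ at $a$.
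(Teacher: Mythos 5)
Your proposal is correct and follows essentially the same route as the paper: the paper's proof simply cites Davis (pp.~21--22 for algebraicity via complexification of the defining polynomial, and pp.~30--33 for the local coincidence of a branch of $S_A$ with a branch of $S_{\mathcal{C}}$ near an arc), and then, exactly as you do, upgrades the local equality by the identity theorem to any common domain of analyticity, in particular the complement of a slit. The only difference is that you spell out the details (the substitution $x=(z+w)/2$, $y=(z-w)/(2i)$, the nonvanishing of $\partial Q/\partial w$ at non-singular points, and the uniqueness characterisation of the Schwarz function) that the paper delegates to the citations.
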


\begin{proof}
The proof for $S_L$ being algebraic can be found in page 21-22 of \cite{Davis}. By the construction of Schwarz functions in page 30-33 of \cite{Davis}, it is clear that a branch of $S_L$ coincides with a branch of $S_{\mathcal{C}}$ in a neighborhood $N$ of $L$. Therefore, these two branches will coincide in any domain $D$ (containing $N$) in which both branches are analytic. In particular, we may take $D$ to be the complement of a suitable slit in $\mathbb{C}$.
\end{proof}

\noindent
{\it Proof of Theorem \ref{general curve}}.
 We denote the real algebraic curve $\mathcal{C}'$ defined by $$
\mathcal{C}'=\{
(x, y)\in \mathbb{R}^2: Q_0(x, y)=0\}
$$
where $Q_0\in \mathbb{R}[x, y]$. Recall that $f(A)\subset \mathcal{C}'$, then as in the proof of Theorem \ref{thm-circle-case}, we can choose an analytic arc $L$ (in $A$) which contains no singular point of $\mathcal{C}$ such that $B=f(L)$ is also an analytic arc and $B$ contains no singular point of $\mathcal{C}^{'}$.
We  rewrite $Q_0(x, y)$ as $Q(w, \overline{w})\in \mathbb{C}[w, \overline{w}]$ such that
\begin{equation*}
Q(f(z), S_{B}(f(z)))\equiv 0
\end{equation*}
in  some neighborhood $N$ of $L$. Since $f(L)$ is connected, we may further assume that $Q$ is irreducible, for otherwise we can consider instead an irreducible factor of $Q$.
Recall that from (9.3) of \cite{DP-1958} or (8.7') of \cite{Davis},
\begin{equation*}
f(\overline{S_{L}(z)})\equiv\overline{S_{B}(f(z))}
\end{equation*}
in $N$ and hence we have
\begin{equation}
Q(f(z), \overline{f(\overline{S_{L}(z)})})\equiv 0
\end{equation}
in $N$. By Proposition \ref{prop-classification}, we know that $S_{L}$ is an algebraic function. Therefore, there exists a simple curve $\gamma=\gamma(t): [0, \infty)\rightarrow \mathbb{C}$  tending to infinity as $t$ goes to infinity, such that $\overline{f(\overline{S_{L}(z)})}$ is meromorphic in $V=\mathbb{C}\backslash \gamma$ and hence
\begin{equation}\label{V}
Q(f(z), \overline{f(\overline{S_{L}(z)})})\equiv 0
\end{equation}
in $V\backslash Z$ where $Z$ is the set of the poles of $f$ or $\overline{f(\overline{S_{L}(z)})}$. As $S_{L}$ is an algebraic function, we can consider the Puiseux series of it at infinity to see that  there exists some $m\in \mathbb{N}$ such that  $S_{L}(z^m)$ is analytic at some punctured neighborhood $U$ of infinity.
This implies that
\begin{equation}
Q(f(z^m), \overline{f(\overline{S_{L}(z^m)})})\equiv 0
\end{equation}
in $W=\{z: z^m\in U\backslash (Z\cup\gamma)\}$.  Let $F(z)=f(z^m)$ and $G(z)=\overline{f(\overline{S_{L}(z^m)})}$ so that both are well-defined  analytic functions in $W$. Assume that $f$ is transcendental. Then $F$ is transcendental meromorphic in $\mathbb{C}$ so that by the Little Picard Theorem, $F$ has at most two Picard exceptional values. Let $a \in \mathbb{C}\backslash F(\gamma)$ be any non-Picard exceptional value of $F$. Then there exists a sequence of $\{z_n\}_{n=1}^{\infty}\subset W$ tending to infinity such that $F(z_n)=a$.

We first consider the case that $f$ is analytic in some neighborhood of $\overline{c}$. By the assumption (\ref{condition a}) and the fact that $S_{\mathcal{C}}\equiv S_L$ for some branch of $S_L$ in $\mathbb{C}\backslash \gamma$ (by Proposition \ref{prop-classification}), we have
$\lim_{z\rightarrow\infty, z\notin \gamma}S_L(z)=c$.Thus, we have
$\lim\limits_{n\rightarrow\infty}G(z_n)=\overline{f(\overline{c})}$ and
\begin{equation*}
Q(a, \overline{f(\overline{c})})=Q(F(z_n),\, \lim_{n\rightarrow\infty}G(z_n))
=\lim_{n\rightarrow\infty}Q(F(z_n), G(z_n))=0.
\end{equation*}
Since there are infinitely many $a$ satisfying $Q(a, \overline{f(\overline{c})})=0$, we have $Q(z, \overline{f(\overline{c})})\equiv0$ in $\mathbb{C}$. Hence $Q(u, v)=(v-\overline{f(\overline{c})})Q'(u, v)$ for some non-constant $Q'\in \mathbb{C}[u, v]$, which contradicts the assumption that $Q$ is irreducible in $\mathbb{C}[u, v]$.

It remains to consider the case that $\overline{c}$ is a pole of $f$. Assume that $Q(F,G)=\sum_{k=0}^{n}p_{k}(F)G^k$, where $p_k \in \mathbb{C}[z]$ for $1\leq k\leq n$ and $p_{n}\not\equiv 0$. We may also assume that $p_{n}(a)\neq 0$ as there are infinitely many non-Picard exceptional values of $F$.  
Then by Lemma \ref{lemma-key}, $\{G(z_n)\}_{n=1}^{\infty}$ will be a bounded sequence. This contradicts to the fact that
$$\lim_{n\rightarrow\infty}G(z_n)=\overline{f(\overline{c})}=\infty$$
and we conclude that $f$ must be rational.
{\hfill $\Box$\par\vspace{2.5mm}}

\noindent
{\it Proof of Theorem \ref{T3}}.
We first deal with the case that $f$ is a non-constant elliptic function with periodic lattice $\Lambda=\mathbb{Z}\omega_1\bigoplus \mathbb{Z}\omega_2$ for some $\mathbb{R}$-linearly independent $\omega_1$ and $\omega_2$ in $\mathbb{C}\backslash\{0\}$. We also assume that the finite map $f:\mathbb{C}/\Lambda \to \mathbb{CP}^{1}$ has degree $d$ so that for each $a\in \mathbb{CP}^1$, $f(z)=a$ has exactly $d$ solutions in the periodic parallelogram.

Like what we did in the proofs of Theorem \ref{thm-circle-case} and \ref{general curve}, we can assume that $A$ is an analytic arc and  there exists an analytic branch of $S_{A}$ in $\mathbb{C}\backslash \gamma$ and
an irreducible polynomial $Q$ in $\mathbb{C}[u, v]$ such that
\begin{equation}
Q(f(z), \overline{f(\overline{S_{A}(z)})})\equiv 0
\end{equation}
in the complement of the slit $\gamma$ in $\mathbb{C}$. As $f$ is non-constant elliptic, $n=\deg_vQ(u,v) \ge 1$. Let $g(z)=\overline{f(\overline{S_{A}(z)})}$.
As $\overline{f(\overline{z})}$ is meromorphic in $\mathbb{C}$ and $S_A$
 is analytic in $\mathbb{C}\backslash\gamma$, $g$ is meromorphic in $\mathbb{C}\backslash\gamma$.

 To proceed, we will make use of the finite map property of $f$ and a counting argument (see page 553 of \cite{beardon2006}). Now choose any complex number $z_0$ in $\mathbb{C}\backslash\gamma$ such that $\deg_vQ(f(z_0),v)=n$ and the orbit $z_0+\mathbb{Z}\omega_1$ is a  subset of  $\mathbb{C}\backslash\gamma$ and does not contain a pole of $f$ or $g$. Then for all $n \in \mathbb{Z}$,
\begin{equation}
Q(f(z_0), g(z_0+n\omega_1))=Q(f(z_0+n\omega_1), g(z_0+n\omega_1))=0.
\end{equation}
As $Q(f(z_0),v)$ has at most $n$ distinct zeros, there must be a  subset $M_{z_0}$ of $\{1,\ldots,nd+1\}$ with cardinality at least $d+1$  such that
$g(z_0+m_k\omega_1)=g(z_0+m_l\omega_1)$ for any $m_k,m_l \in M_{z_0}$.
As each $M_{z_0}$ is a subset of $\{1,\ldots,nd+1\}$, there are only finitely many distinct $M_{z_0}$. Since there is an uncountable
number of choices of $z_0$, there must be an uncountable set $U$ of $z$ for which
the set $M$ is independent of $z_0$ in $U$. Since any uncountable subset of $\mathbb{C}$
has an uncountable set of accumulation points, we deduce that
for any $m_k$ and $m_l$ in $M$, $g(z+m_k\omega_1)\equiv g(z+m_l\omega_1)$ in $\mathbb{C}\backslash\gamma$ and hence we have
\begin{equation}\label{elliptic}
f(\overline{S_{A}(z+m_k\omega_1)})\equiv f (\overline{S_{A}(z+m_l\omega_1)})
\end{equation}
in $\mathbb{C}\backslash\gamma$.
As $f:\mathbb{C}/\Lambda \to \mathbb{C}\cup\{\infty\}$ is of degree $d$ and $M$ contains at  least $d+1$ points, it follows from (\ref{elliptic}) and a similar  counting argument that there exist distinct $n_1, n_2 \in M$ such that
$$
\overline{S_{A}(z+n_1\omega_1)}-\overline{S_{A}(z+n_2\omega_1)} \in \Lambda
$$
for $z \in \mathbb{C}\backslash\gamma$.
As $\Lambda$ is a discrete set and $S_{A}(z+n_1\omega_1)-S_{A}(z+n_2\omega_1)$ is analytic in the connected set $\mathbb{C}\backslash\{(\gamma-n_1\omega_1)\cup(\gamma-n_2\omega_1) \}$, we must have
$$
S_{A}(z+n_1\omega_1)-S_{A}(z+n_2\omega_1)\equiv
l_1\overline{\omega_1}+l_2\overline{\omega_2}
$$
in $\mathbb{C}\backslash\{(\gamma-n_1\omega_1)\cup(\gamma-n_2\omega_1) \}$ for some $l_1, l_2\in \mathbb{Z}$.
Since $S_{A}(z+n_2\omega_1)$ is analytic in $\gamma-n_1\omega_1$, it follows that $S_{A}(z+n_1\omega_1)$  can be extended to an entire function by the above identity. Since $S_A$ is algebraic and entire, it must be a polynomial. Notice that the polynomial $S_A'$ is periodic. This will reduce $S_A'$ to a constant and hence $S_A(z)=az+b$. Then from (6.11) of Davis' book \cite{Davis}, we have  $\overline{S_A(\overline{S_A(z)})}=z$, and we can deduce that $|a|=1$. To see that $A$ is a straight line segment, consider any two points $z_1,z_2$ in $A$. Then $\overline{z_i}=S_A(z_i)=az_i+b$ for $i=1,2$. It then follows that
$\frac{\overline{z_1}-\overline{z_2}}{z_1-z_2}=a$ and we are done.\\

Now consider that case that $f(z)=R(e^{\alpha z})$ for some rational function $R$  and $\alpha\in \mathbb{C}\backslash\{0\}$. Without loss of generality, we may assume that $\alpha=2\pi i$ so that $f$ has period $1$. Let $D=\{x+iy:0\le x < 1\}$. Then $f:D\to \mathbb{C}\cup\{\infty\}$ is a finite map. One can apply a similar argument to show that $A$ is a straight line segment.
{\hfill $\Box$\par\vspace{2.5mm}}


\begin{thebibliography}{10}

\bibitem{alexander1974}
H. J. Alexander, \emph{Holomorphic mappings from the ball and polydisc},
  Math. Ann. \textbf{209} (1974), 249--256.

\bibitem{baouendi1996}
M. S. Baouendi, P. Ebenfelt, and L. P. Rothschild, \emph{Algebraicity of holomorphic mappings between real algebraic sets in
  $\mathbb{C}^n$}, Acta Math. \textbf{177} (1996), no.~2, 225--273.

\bibitem{beardon2006}
A. F. Beardon and T. W. Ng, \emph{Parameterizations of algebraic curves},
Ann. Acad. Sci. Fenn. Math.  \textbf{31} (2006), no.~2, 541--554.

\bibitem{conte2010}
R. Conte and T. W. Ng, \emph{Meromorphic solutions of a third order
  nonlinear differential equation}, J. Math. Phys. \textbf{51}
  (2010), no.~3, 033518.

\bibitem{conte2012}
R. Conte, T. W. Ng, and K. K. Wong, \emph{Exact meromorphic
  stationary solutions of the real cubic Swift-Hohenberg equation}, Stud. Appl. Math. \textbf{129} (2012), no.~1, 117--131.

\bibitem{conte2022}
R. Conte, T. W. Ng and C. F. Wu.
\newblock
\emph{Closed-form meromorphic solutions of some third order boundary layer ordinary differential equations}, Bull. Sci. Math., \textbf{174} (2022) 103096.

\bibitem{coupet2005}
B. Coupet and A. B. Sukhov, \emph{Reflection principle and boundary
  properties of holomorphic mappings}, J. Math. Sci. (N.Y.) \textbf{125} (2005), no.~6, 825--930.

\bibitem{DP-1958}
P. J. Davis and H. Pollak, \emph{On the analytic continuation of mapping
  functions}, Trans. Amer. Math. Soc. \textbf{87}
  (1958), no.~1, 198--225.

\bibitem{Davis}
P. J. Davis, \emph{The Schwarz Function and Its Applications}, vol.~17,
  American Mathematical Soc., 1974.

\bibitem{eremenko2009}
A. E. Eremenko, L. W. Liao, and T. W. Ng, \emph{Meromorphic
  solutions of higher order Briot--Bouquet differential equations}, Math. Proc. Cambridge Philos. Soc., \textbf{146} (2009), no.~1, 197--206.

\bibitem{forstnerivc1989}
F. Forstneri{\v{c}}, \emph{Extending proper holomorphic mappings of positive
  codimension}, Invent. Math. \textbf{95} (1989), no.~1, 31--61.

\bibitem{gu2020}
Y. Gu, C. F. Wu, X. Yao, and W.J. Yuan, \emph{Characterizations of
  all real solutions for the KdV equation and $w_{R}$}, Applied Mathematics
  Letters \textbf{107} (2020), 106446.

\bibitem{huang1994}
X. J. Huang, \emph{On the mapping problem for algebraic real hypersurfaces in
  the complex spaces of different dimensions}, Ann. Inst. Fourier (Grenoble),
  \textbf{44} (1994), 433--463.

\bibitem{huang1998}
X. J. Huang and S. Y. Ji, \emph{Global holomorphic extension of a local map
  and a Riemann mapping theorem for algebraic domains}, Math. Res. Lett. \textbf{5} (1998), no.~2, 247--260.

\bibitem{jones1987}
G.A. Jones and D. Singerman, \emph{Complex functions: an algebraic and
  geometric viewpoint}, Cambridge University Press, 1987.

\bibitem{merker2001}
J. Merker, \emph{On the partial algebraicity of holomorphic mappings
  between two real algebraic sets}, Bulletin de la Soci{\'e}t{\'e}
  Math{\'e}matique de France \textbf{129} (2001), no.~4, 547--591.

\bibitem{millar1990}
R. F. Millar, \emph{Inverse problems for a class of schwarz functions},
  Complex Var. Elliptic Equ. \textbf{15} (1990), no.~1, 1--10.

\bibitem{mir2021}
N. Mir and D. Zaitsev, \emph{Unique jet determination and extension of
  germs of $\mbox{CR}$ maps into spheres}, Trans. Amer. Math. Soc. \textbf{374} (2021), no.~3, 2149--2166.

\bibitem{ng2019}
T. W. Ng and C. F. Wu, \emph{Nonlinear Loewy factorizable algebraic odes
  and Hayman's conjecture}, Israel J. Math. \textbf{229} (2019),
  no.~1, 1--38.

\bibitem{PS-2020}
F. Pakovich and I. Shparlinski, \emph{Level curves of rational functions
  and unimodular points on rational curves}, Proc. Amer. Math. Soc. \textbf{148} (2020), no.~5, 1829--1833.

\bibitem{pinvcuk1975}
S. I. Pin{\v{c}}uk, \emph{On the analytic continuation of holomorphic mappings},
  Math. USSR-Sb. \textbf{27} (1975), no.~3, 375--392.

\bibitem{Poincare1907}
H. Poincar{\'e}, \emph{Les fonctions analytiques de deux variables et la
  repr{\'e}sentation conforme}, Rendiconti del Circolo Matematico di Palermo
  (1884-1940) \textbf{23} (1907), no.~1, 185--220.

\bibitem{RS-2002}
Q. I. Rahman and G. Schmeisser, \emph{Analytic theory of
  polynomials}, no.~26, Oxford University Press, 2002.

\bibitem{shapiro1984}
H. S. Shapiro, \emph{Domains allowing exact quadrature identities for
  harmonic functions--an approach based on p.d.e}, Anniversary volume on
  approximation theory and functional analysis, Springer, 1984, pp.~335--354.

\bibitem{shapiro1992}
\bysame, \emph{The Schwarz function and its generalization to higher
  dimensions}, vol.~4, John Wiley \& Sons, 1992.

\bibitem{Tanaka}
N. Tanaka, \emph{On the pseudo-conformal geometry of hypersurfaces of the
  space of n complex variables}, J. Math. Soc. Japan
  \textbf{14} (1962), no.~4, 397--429.

\bibitem{webster}
S. M. Webster, \emph{On the mapping problem for algebraic real
  hypersurfaces}, Invent. Math. \textbf{43} (1977), no.~1, 53--68.

\end{thebibliography}

\end{document}